\begin{document}

\newtheorem{innercustomgeneric}{\customgenericname}
\providecommand{\customgenericname}{}
\newcommand{\newcustomtheorem}[2]{%
  \newenvironment{#1}[1]
  {%
   \renewcommand\customgenericname{#2}%
   \renewcommand\theinnercustomgeneric{##1}%
   \innercustomgeneric
  }
  {\endinnercustomgeneric}
}

\newtheorem{theorem}{Theorem}
\newtheorem{lemma}[theorem]{Lemma}
\newtheorem{claim}[theorem]{Claim}
\newtheorem{cor}[theorem]{Corollary}
\newtheorem{prop}[theorem]{Proposition}
\newtheorem{definition}{Definition}
\newtheorem{question}[theorem]{Open Question}
\newtheorem{example}[theorem]{Example}
\newtheorem{remark}[theorem]{Remark}
\newcustomtheorem{customthm}{Theorem}

\numberwithin{equation}{section}
\numberwithin{theorem}{section}

 \newcommand{\F}{\mathbb{F}}
\newcommand{\K}{\mathbb{K}}
\newcommand{\D}[1]{D\(#1\)}
\def\scr{\scriptstyle}
\def\\{\cr}
\def\({\left(}
\def\){\right)}
\def\[{\left[}
\def\]{\right]}
\def\<{\langle}
\def\>{\rangle}
\def\fl#1{\left\lfloor#1\right\rfloor}
\def\rf#1{\left\lceil#1\right\rceil}
\def\le{\leqslant}
\def\ge{\geqslant}
\def\eps{\varepsilon}
\def\mand{\qquad\mbox{and}\qquad}
\newcommand{\asum}{\sideset{}{^{\ast}}\sum}

\def\vec#1{\mathbf{#1}}

\newcommand{\lcm}{\operatorname{lcm}}

\def\bl#1{\begin{color}{blue}#1\end{color}} 

\newcommand{\QQ}{\mathbb{Q}}

\newcommand{\C}{\mathbb{C}}
\newcommand{\Fq}{\mathbb{F}_q}
\newcommand{\Fp}{\mathbb{F}_p}
\newcommand{\Disc}[1]{\mathrm{Disc}\(#1\)}
\newcommand{\Res}[1]{\mathrm{Res}\(#1\)}
\newcommand{\ord}{\mathrm{ord}}

\newcommand{\Q}{\mathbb{Q}}
\newcommand{\Z}{\mathbb{Z}}
\renewcommand{\L}{\mathbb{L}}

\newcommand{\Norm}{\mathrm{Norm}}

\def\cA{{\mathcal A}}
\def\cB{{\mathcal B}}
\def\cC{{\mathcal C}}
\def\cD{{\mathcal D}}
\def\cE{{\mathcal E}}
\def\cF{{\mathcal F}}
\def\cG{{\mathcal G}}
\def\cH{{\mathcal H}}
\def\cI{{\mathcal I}}
\def\cJ{{\mathcal J}}
\def\cK{{\mathcal K}}
\def\cL{{\mathcal L}}
\def\cM{{\mathcal M}}
\def\cN{{\mathcal N}}
\def\cO{{\mathcal O}}
\def\cP{{\mathcal P}}
\def\cQ{{\mathcal Q}}
\def\cR{{\mathcal R}}
\def\cS{{\mathcal S}}
\def\cT{{\mathcal T}}
\def\cU{{\mathcal U}}
\def\cV{{\mathcal V}}
\def\cW{{\mathcal W}}
\def\cX{{\mathcal X}}
\def\cY{{\mathcal Y}}
\def\cZ{{\mathcal Z}}

\def\fra{{\mathfrak a}} 
\def\frb{{\mathfrak b}}
\def\frc{{\mathfrak c}}
\def\frd{{\mathfrak d}}
\def\fre{{\mathfrak e}}
\def\frf{{\mathfrak f}}
\def\frg{{\mathfrak g}}
\def\frh{{\mathfrak h}}
\def\fri{{\mathfrak i}}
\def\frj{{\mathfrak j}}
\def\frk{{\mathfrak k}}
\def\frl{{\mathfrak l}}
\def\frm{{\mathfrak m}}
\def\frn{{\mathfrak n}}
\def\fro{{\mathfrak o}}
\def\frp{{\mathfrak p}}
\def\frq{{\mathfrak q}}
\def\frr{{\mathfrak r}}
\def\frs{{\mathfrak s}}
\def\frt{{\mathfrak t}}
\def\fru{{\mathfrak u}}
\def\frv{{\mathfrak v}}
\def\frw{{\mathfrak w}}
\def\frx{{\mathfrak x}}
\def\fry{{\mathfrak y}}
\def\frz{{\mathfrak z}}

\def\ov\QQ{\overline{\QQ}}
\def \brho{\boldsymbol{\rho}}

\def \fP {\mathfrak P}

\def \Prob{{\mathrm {}}}
\def\e{\mathbf{e}}
\def\ep{{\mathbf{\,e}}_p}
\def\epp{{\mathbf{\,e}}_{p^2}}
\def\em{{\mathbf{\,e}}_m}

\newcommand{\sR}{\ensuremath{\mathscr{R}}}
\newcommand{\sDI}{\ensuremath{\mathscr{DI}}}
\newcommand{\DI}{\ensuremath{\mathrm{DI}}}
\let\oldpmod\pmod
\renewcommand{\pmod}[1]{\hspace{-0.12cm}\oldpmod {#1}}

\newcommand{\Orb}[1]{\mathrm{Orb}\(#1\)}
\newcommand{\aOrb}[1]{\overline{\mathrm{Orb}}\(#1\)}

\def \Nm{{\mathrm{Nm}}}
\def \Gal{{\mathrm{Gal}}}

\newenvironment{notation}[0]{%
  \begin{list}%
    {}%
    {\setlength{\itemindent}{0pt}
     \setlength{\labelwidth}{1\parindent}
     \setlength{\labelsep}{\parindent}
     \setlength{\leftmargin}{2\parindent}
     \setlength{\itemsep}{0pt}
     }%
   }%
  {\end{list}}

\definecolor{dgreen}{rgb}{0.,0.6,0.}
\def\tgreen#1{\begin{color}{dgreen}{\it{#1}}\end{color}}
\def\tblue#1{\begin{color}{blue}{\it{#1}}\end{color}}
\def\tred#1{\begin{color}{red}#1\end{color}}
\def\tmagenta#1{\begin{color}{magenta}{\it{#1}}\end{color}}
\def\tNavyBlue#1{\begin{color}{NavyBlue}{\it{#1}}\end{color}}
\def\tMaroon#1{\begin{color}{Maroon}{\it{#1}}\end{color}}

\title[On Artin's conjecture on average]{On Artin's conjecture on average and short character sums} 

 \author[O.~Klurman]{Oleksiy Klurman}
 \address{School of Mathematics, University of Bristol, BS8 1QU, United Kingdom}
 \email{oleksiy.klurman@bristol.ac.uk}
 
  \author[I.~E.~Shparlinski]{Igor E. Shparlinski}
 \address{School of Mathematics and Statistics, University of New South Wales.
 Sydney, NSW 2052, Australia}
 \email{igor.shparlinski@unsw.edu.au}

\author[J. Ter\"av\"ainen] {Joni Ter\"av\"ainen}
\address{Department of Mathematics and Statistics, University of Turku, 20014
Turku, Finland}
\email{joni.p.teravainen@gmail.com}

\begin{abstract} Let $N_a(x)$ denote the number of primes up to $x$ for which the integer $a$ is a primitive root. We show that $N_a(x)$ satisfies the asymptotic predicted by Artin's conjecture for almost all $1\le  a\le  \exp((\log \log x)^2)$. This improves on a result of Stephens (1969). A key ingredient in the proof is a new short character sum estimate over the integers, improving on  the range of  a result of Garaev (2006). 
\end{abstract}  

\keywords{primitive root, Artin's conjecture on average, short character sums}
\subjclass[2020]{11A07, 11L40, 11N25}

\pagenumbering{arabic}

\maketitle

\section{Introduction}

\subsection{Background and set-up} 
Artin's famous conjecture on primitive roots asserts that any integer  $a \ne -1$, which is not a perfect square, 
is a primitive root for a set of primes of positive relative density, with the density depending on the arithmetic structure 
of $a$. 
This has been established conditionally under the generalised Riemann hypothesis (GRH) for Dedekind zeta functions of certain number fields, 
in a celebrated work of Hooley~\cite{Hool}.  

More precisely, if $a$ is an integer that is not $-1$ or a perfect square, let $b\in \mathbb{N}$ be the squarefree part of $a$ and let 
 $h\ge 1$ be  the largest integer such that $a$ is a perfect $h$-th power. We also define
 $$
A(h) =  \prod_{\ell\mid h}\left(1-\frac{1}{\ell-1}\right)\prod_{\ell\nmid h}\left(1-\frac{1}{\ell(\ell-1)}\right), 
$$
where $\ell$ runs over primes. 
Denoting 
$$
N_a(x)=\#\{p\le  x\colon~a\textnormal{ is a primitive root modulo } p\},
$$
by a result of Hooley~\cite[Section~7]{Hool} under GRH we have  
\begin{itemize}
\item  if $b \not \equiv 1 \pmod 4$, then
$$
N_a(x) = A(h) \pi(x)  + O_a\( \frac{x \log \log x}{(\log x)^2}\),
$$
\item If $b  \equiv 1 \pmod 4$, then
\begin{align*}
N_a(x) = A(h)\(1-\mu(b) \prod_{\substack{\ell\mid h\\\ell \mid b}} \frac{1}{\ell-2} 
 \prod_{\substack{\ell\nmid h\\\ell \mid b}} \frac{1}{\ell^2 -\ell-1}\) &\pi(x) \\ + O_a&\( \frac{x \log \log x}{(\log x)^2}\),
\end{align*}
\end{itemize}
where, as usual, $\mu(b)$ denotes the M{\"o}bius function, $\pi(x)$ is the prime counting function, 
and $O_\rho$ and $\ll_\rho$ indicates that the implied constants
may depend on the parameter $\rho$, see Section~\ref{sec:not} for an exact definition. 
We also refer to the exhaustive survey of Moree~\cite{Mor}
for a wide variety of other results and references, see also~\cite{JP,JPS,PeSh} for more recent developments and 
further references. 

In another direction, we mention the celebrated result of Heath-Brown~\cite{HB}, improving on a beautiful work of Gupta and Murty~\cite{GuMu}, which shows that Artin's conjecture holds for all but possibly two primes. Heath-Brown also shows that
$$
\#\{|a|\le  y\colon~\limsup_{x\to \infty}N_a(x)<\infty\}\ll (\log y)^2.    
$$
However, the methods of~\cite{GuMu,HB}  do not yield an asymptotic for $N_a(x)$ for almost all $a$ (and in fact the lower bounds are off from the conjectured magnitude by a factor of $(\log x)^{-1}$, since all the primes $p$ detected are such that $(p-1)/K$ has at most two prime factors for some small positive integer $K$).

Since  an unconditional proof  of Artin's conjecture still seems to be out of reach, it is interesting to study $N_a(x)$ for almost all $a$
(with an ultimate goal of reducing the amount of averaging).  
We observe that for a ``typical'' integer $a$, we have $h=1$, while $b$ is quite large, 
making the  main terms in the above asymptotic formulas for $N_a(x)$
to be $A\pi(x)$  in both cases,  with 
\begin{equation}
\label{eq:  Art Const}
A = A(1)= \prod_{\ell }\left(1- \frac{1}{ \ell(\ell-1)}\right) =0.373955 \ldots, 
\end{equation}
which is called {\it Artin's constant\/}. 

In particular, Stephens~\cite{Steph}, improving on a previous results of Goldfeld~\cite{Goldfeld}, 
established in 1969 the following almost-all result. 

\begin{customthm}{A}
Let $D\geq 1$, and let $x,y\geq 3$ satisfy 
\begin{equation}
\label{eq: Steph Range}
y \ge \exp\( 6 \( \log x \log  \log x\)^{1/2}\). 
\end{equation}
Then we have
\begin{equation}
\label{eq: Av Art}
\left|N_a(x)-A\pi(x)\right|\le  \frac{\pi(x)}{(\log x)^D}
\end{equation}
for all but $O_D\(y/(\log x)^{D}\)$ integers $a\in [-y,y]$. 
\end{customthm}

In this paper, we augment the ideas of Stephens~\cite{Steph} with arguments involving short character sums and the anatomy of integers and reduce quite significantly the range~\eqref{eq: Steph Range} of $y$ 
for which~\eqref{eq: Av Art} holds.

\begin{theorem}
\label{thm:Artin Aver}  Let $D\geq 10$, and let $x,y\geq 100$ satisfy 
\begin{equation}\label{eq:ylower}
y \ge \exp\( \frac{60(D+1)}{\log 2}\cdot \frac{\(\log  \log x\)^{2}}{\log \log \log x}\). 
\end{equation}
Then we have
$$
\left|N_a(x)-A\pi(x)\right|\le  \frac{\pi(x) }{(\log x)^D}
$$
for all but $O_D\(y/(\log x)^{D}\)$ integers $a\in [-y,y]$. 
\end{theorem}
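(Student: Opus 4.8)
The plan is to follow Stephens's strategy of reducing the almost-all Artin statement to a mean-square estimate over $a$, but to feed into it a sharper short character sum input (the advertised improvement of Garaev's range) so that much shorter ranges of $y$ become admissible. Concretely, using the indicator of primitivity via sums over divisors $d\mid p-1$ of the M\"obius function together with the fact that $a$ is a primitive root mod $p$ iff $a$ is a non-residue for every prime power order, one writes
\[
N_a(x)=\sum_{p\le x}\sum_{d\mid p-1}\frac{\mu(d)}{\varphi(d)}\sum_{\substack{\chi\bmod p\\ \ord\chi=d}}\chi(a),
\]
and isolates the $d=1$ term, which contributes $\pi(x)$, times a correction coming from completing the Euler product; the heuristic main term $A\pi(x)$ appears after summing the multiplicative weights over all $d$. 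The error is then governed by $\sum_{p\le x}\sum_{d\mid p-1,\,d>1}\frac{1}{\varphi(d)}\big|\sum_{\ord\chi=d}\chi(a)\big|$, and one averages its square (or just first moment with a dyadic decomposition) over $a\in[-y,y]$.

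The next step is the averaging over $a$. Opening the square and switching the order of summation, one is left with sums of the shape $\sum_{a\le y}\chi_1(a)\overline{\chi_2}(a)$ for characters $\chi_1\bmod p_1$, $\chi_2\bmod p_2$; when $(\chi_1,p_1)\ne(\chi_2,p_2)$ this is a character sum to modulus $p_1p_2$ (or $p_1$, $p_2$), and here is where a short character sum bound is needed because $y$ is much smaller than the modulus. The improved short character sum estimate over the integers (the paper's strengthening of Garaev~\cite{Garaev}, to be proved earlier in the paper) should give cancellation as soon as $y\ge \exp(c(\log\log q)^2/\log\log\log q)$ for $q$ the modulus, which is exactly the shape of the bound \eqref{eq:ylower}; the diagonal terms $(\chi_1,p_1)=(\chi_2,p_2)$ contribute the main saving of size roughly $y\sum_p \big(\sum_{d\mid p-1,d>1}\varphi(d)^{-1}\big)^2$, which after using the anatomy of $p-1$ (most $d$ are small, large prime factors of $p-1$ are rare) is $\ll y/(\log x)^{2D}$ times $\pi(x)^2$ or better. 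Combining, the mean square of the error is $\ll y\,\pi(x)^2/(\log x)^{2D}$, and Chebyshev then yields the exceptional set bound $O_D(y/(\log x)^D)$.

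I would organize the write-up as: (i) recall the divisor/character expansion and identify the main term with $A\pi(x)$, controlling the completion error by the tail $\prod_{\ell>z}(1-1/\ell(\ell-1))$-type estimates and by discarding $p$ with an unusually large prime factor of $p-1$ via a standard sieve/Mertens bound; (ii) reduce to bounding $\sum_{a\in[-y,y]}|E(a)|$ where $E(a)$ is the character-sum error, via a dyadic decomposition in $d$ and in the size of prime factors; (iii) apply Cauchy--Schwarz, expand, and split into diagonal and off-diagonal; (iv) bound the off-diagonal using the new short character sum theorem, checking that \eqref{eq:ylower} with $D\ge 10$ makes the modulus $p_1p_2\le x^2$ fall in the admissible range and that the resulting saving beats $(\log x)^{-D}$ with room to spare; (v) bound the diagonal using anatomy-of-integers estimates for $\sum_{d\mid n,d>1}1/\varphi(d)$ on average over $n=p-1$. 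The main obstacle is step (iv): one must make the short character sum bound uniform enough and quantitatively strong enough that, after the loss incurred by summing over the $\le \tau(p_1-1)\tau(p_2-1)\le x^{o(1)}$ pairs of characters and over $p_1,p_2\le x$, the total off-diagonal contribution is still $o(y\,\pi(x)^2/(\log x)^{2D})$; tracking the constant $60(D+1)/\log 2$ through this optimization, and ensuring the exponent in the short-sum bound is genuinely of order $(\log\log q)^2/\log\log\log q$ rather than $(\log\log q)^2$, is the delicate part. A secondary technical point is handling $a$ that are perfect powers or have small squarefree part (for which the "main term" is not $A\pi(x)$): these form an exceptional set of size $O(\sqrt y + y/(\log y)^{\text{large}})$, which is absorbed into $O_D(y/(\log x)^D)$ given \eqref{eq:ylower}.
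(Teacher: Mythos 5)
Your proposal follows essentially the same route as the paper: the Stephens-style character expansion of the primitive-root indicator, a mean-square estimate over $a\in[-y,y]$ reduced to short character sums to moduli $p_1p_2\le x^2$, and Chebyshev. The paper likewise establishes an $L^1$ bound (its Proposition~\ref{prop:L1}) and then an $L^2$ bound (its Proposition~\ref{prop:L2}), writing $\sum_a N_a(x)^2$ via the identity~\eqref{eq:primroot} and splitting the double sum by whether $t_1$ and/or $t_2$ equals $1$, which is a reorganization of your diagonal/off-diagonal split. A few concrete points need repair, however.

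First, the indicator identity you wrote is missing the prefactor $\varphi(p-1)/(p-1)$: the correct statement (the paper's~\eqref{eq:primroot}) is
\[
\frac{\varphi(p-1)}{p-1}\sum_{t\mid p-1}\frac{\mu(t)}{\varphi(t)}\sum_{\substack{\chi\in\mathcal{X}_p\\ \ord(\chi)=t}}\chi(a)=\mathbf{1}_{a\in\mathcal{G}_p}.
\]
That factor is where the constant $A$ actually comes from, via Stephens's estimate $\sum_{p\le x}\varphi(p-1)/p = A\pi(x)+O_D(\pi(x)(\log x)^{-3D})$, rather than from ``completing the Euler product'' as you suggest. Relatedly, your worry about $a$ being a perfect power (or having small squarefree part) is a red herring: the identity is exact for every $a$ coprime to $p$, so such $a$ simply land in the exceptional set without needing separate treatment.

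Second, and more substantively, your claimed mean-square bound $\sum_a|N_a(x)-A\pi(x)|^2\ll y\,\pi(x)^2/(\log x)^{2D}$ is too weak for Chebyshev to deliver the conclusion. With threshold $T=\pi(x)/(\log x)^D$, Chebyshev gives exceptional set $\le y\,\pi(x)^2(\log x)^{-2D}/T^2 = y$, which is vacuous. You need the variance to be $\ll y\,\pi(x)^2/(\log x)^{3D}$, i.e.\ a saving of $(\log x)^{-(3D+2)}$ in the underlying character sums after accounting for the two $\log$'s lost to the Titchmarsh divisor bound $\sum_{p\le x}2^{\omega(p-1)}\ll x$. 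This is precisely why the paper chooses the parameter $\lambda$ in its Theorem~\ref{thm:short char sum} adaptively as $\lambda=\frac{3D+2}{\log 2}\cdot\frac{\log\log(x^2)}{\log\log y}+\frac{1}{\log 2}$, so that $(\log y)^{\lambda\log 2-1}\asymp(\log x)^{3D+2}$. You correctly flag that tracking this optimization is the delicate part, but the target exponent in the variance should be $3D$, not $2D$.

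Finally, note that the short character sum input is an \emph{almost all moduli} result, with an exceptional set of moduli of size $O(x^{0.49})$; in the second-moment argument one therefore needs to exclude pairs $(p_1,p_2)$ with $p_1p_2$ exceptional, and the paper arranges the size of this exceptional set to be $O((x^2)^{0.49})=O(x^{0.98})$ precisely so that its contribution is negligible next to $y\pi(x)^2(\log x)^{-3D}$. You should make this exclusion explicit rather than leaving it implicit.
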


\subsection{Short character sums}

As in~\cite{Steph}, our approach is based on bounds for short character sums
for almost all integer moduli. However, we can obtain cancellation in significantly shorter character sums, leading to an improved range in the application to Artin's conjecture. 
We refer to~\cite[Chapter~3]{IwKow} for the relevant background on character sums.

\begin{theorem}
\label{thm:short char sum} Let $x,y\geq 3$ and 
\begin{equation}
\label{eq: range C}
3 \le  \lambda\le  (\log y)/(\log \log y)^2, 
\end{equation}
and suppose that 
\begin{equation}
\label{eq: range y}
\exp\(20\lambda (\log(\lambda\log \log x)) \(\log \log x\)\)\le  y\le  x.
\end{equation}
Then, for all but at most $O(x^{0.49})$ natural numbers $q \le  x$, we have 
\begin{equation}\label{eq:charsum}
 \max_{\chi \in \cX_q^*} 
\left|\sum_{1 \le a \le y} \chi(a)\right|\ll y / (\log y)^{\lambda\log 2-1}, 
\end{equation}
where  $\cX_q^*$ denotes the set of all primitive
Dirichlet characters modulo $q$.  
\end{theorem}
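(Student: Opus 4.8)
The plan is to reduce the short character sum to a multiplicative problem over smooth-free integers and then exploit the structure of $a$ with few prime factors, combined with a second-moment (large sieve) argument over moduli $q$. First I would introduce a parameter $z = z(\lambda, x)$, chosen roughly so that $(\log z)$ is of size $\lambda \log\log x$ (comparable to the exponent in~\eqref{eq: range y}), and split the sum $\sum_{a \le y}\chi(a)$ according to the largest prime factor $P^+(a)$ of $a$. The contribution of $z$-smooth numbers $a \le y$ is $O(\Psi(y,z))$, and by Rankin's trick / standard smooth-number estimates this is $\ll y/(\log y)^{\lambda \log 2 - 1}$ provided $z$ is chosen at the right scale; here one uses $u = \log y/\log z \asymp (\log\log y)/(\lambda)$ together with~\eqref{eq: range C} to keep $u$ bounded below, so that $\Psi(y,z) \ll y u^{-u(1+o(1))}$ gives the claimed saving. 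The key point is that~\eqref{eq: range y} is exactly engineered so that the smooth part is negligible.

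Next, for the non-smooth part, write each $a$ with $P^+(a) = p > z$ as $a = p m$ with $m \le y/p$, so $\chi(a) = \chi(p)\chi(m)$, and sum over $p$ and $m$. This expresses the remaining sum as a bilinear form in the variables $p$ (primes in $(z, y]$) and $m$. I would then apply the large sieve inequality for multiplicative characters — in the form averaging over all $q \le x$ and all primitive $\chi \bmod q$ — to the bilinear form $\sum_{p}\sum_{m} \alpha_p \beta_m \chi(pm)$, exploiting that the variable $p$ ranges over primes $> z$, i.e.\ over a set of size roughly $y/\log z$ relative to the full dyadic block, and using that there are $\gg z$ available "long" variables. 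A Cauchy–Schwarz split in the bilinear form, followed by the large sieve, yields that for all but $O(x^{0.49})$ moduli $q \le x$ the non-smooth contribution is $\ll y/(\log y)^{\lambda\log 2 - 1}$ as well; the exponent $\lambda\log 2 - 1$ emerges from the smooth-number count, and the exceptional set bound $O(x^{0.49})$ comes from Chebyshev's inequality applied to the large-sieve second moment, since the total $L^2$ mass over moduli is $\ll x^{1+o(1)}\cdot y$ while we want to beat a threshold of size roughly $x \cdot (y/(\log y)^{\lambda\log 2-1})^2$, and the condition $y \le x$ gives the needed room.

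I expect the main obstacle to be making the two pieces fit together \emph{with the same exponent $\lambda\log 2 - 1$} and \emph{within the stated range}~\eqref{eq: range y}: the smooth part forces a particular choice of $z$ and hence of $u = \log y/\log z$, and one must check that with this $z$ the bilinear / large-sieve estimate still produces a power saving of at least $(\log y)^{\lambda\log 2-1}$ rather than something weaker, while keeping the exceptional set of moduli below $x^{0.49}$. A secondary technical point is handling the characters that are imprimitive or of small conductor, and ensuring that conductors dividing $q$ are treated correctly; this is routine but needs the condition $\lambda \le (\log y)/(\log\log y)^2$ from~\eqref{eq: range C} so that the logarithmic losses in the large sieve (a factor like $(\log x)^{O(1)}$ or $qy$ in the classical inequality) are absorbed by the main saving. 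One should also verify that the constant $20$ in~\eqref{eq: range y} is comfortably large enough to dominate the $u^{-u}$ decay after accounting for the factor $\log(\lambda\log\log x)$; tracking constants carefully here is the part most likely to require care, though conceptually it is just bookkeeping on top of the smooth-number bound and the large sieve.
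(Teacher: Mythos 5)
Your proposal and the paper's proof share the high-level theme of combining anatomy-of-integers decompositions with the multiplicative large sieve, but they differ in the two essential choices, and the difference is fatal to your argument. You decompose by the largest prime factor (smooth vs.\ non-smooth), whereas the paper decomposes by the total number of prime factors, considering separately the sets $\cA_w(y)=\{a\le y:\Omega(a)=w\}$ for $w\le\lambda\log\log y$ and bounding the tail $\Omega(a)>\lambda\log\log y$ trivially by Lemma~\ref{le:nicolas}. More importantly, you take only a \emph{second} moment over moduli $q\le x$, while the paper takes the $2k$-th moment with $k$ chosen so that $y^k<x^2\le y^{k+1}$, i.e.\ $k\asymp\log x/\log y$, which is large precisely in the regime of interest where $y$ is a tiny power of $\log x$ relative to $x$.

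The gap in your plan is the second-moment step. The multiplicative large sieve~\cite[Theorem~7.13]{IwKow} gives, for any coefficients supported on $[1,y]$,
$$
\sum_{q\le x}\sum_{\chi\in\cX_q^*}\Bigl|\sum_{a\le y}c_a\chi(a)\Bigr|^2\ll(x^2+y)\sum_{a\le y}|c_a|^2,
$$
and the $x^2$ term dominates since $y\le x$. For $c_a\equiv 1$ this is $\ll x^2 y$, not $x^{1+o(1)}y$ as your sketch asserts; Cauchy--Schwarz applied to a bilinear form $\sum_p\sum_m\chi(pm)$ reproduces the same $x^2$ loss on both factors and does not help. Consequently, Chebyshev with threshold $T=y/(\log y)^{\lambda\log 2-1}$ gives an exceptional set of size $\ll x^2 y/T^2=x^2(\log y)^{O(\lambda)}/y$, which for $y\le x$ is $\gg x$ — vacuous. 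In other words, no second-moment estimate over moduli can beat the trivial bound when $y$ is small compared to $x$. This is exactly why the paper raises the character sum to the $2k$-th power: the moment estimate~\eqref{eq: S and T} becomes $\ll(x^2+y^k)T_k(w,y)$, and choosing $k$ so that $y^k\asymp x^2$ kills the $x^2$ term. The price is that one must now control $T_k(w,y)$, the number of solutions of $a_1\cdots a_k=b_1\cdots b_k$ with all $a_i,b_j\in\cA_w(y)$, and this is where the restriction $\Omega(a)=w\le\lambda\log\log y$ is essential: $T_k(w,y)\le\binom{kw}{w,\ldots,w}(\#\cA_w(y))^k$, with the multinomial coefficient bounded by $ewk^{kw+1}$. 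A smooth/non-smooth split gives no analogous control on the $2k$-th moment, because the number of factorizations of a generic non-smooth integer into $k$ parts is not bounded in a useful way. So the two ingredients your plan is missing — the high moment and the $\Omega(a)=w$ stratification — are in fact inseparable, and without them the argument does not close.
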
  

\begin{remark}
It is important to note that the implied constants in Theorem~\ref{thm:short char sum} 
do not depend on $\lambda,$ which we choose 
to grow with $x$ in the proof of  Theorem~\ref{thm:Artin Aver}; see~\eqref{eq:Cdef}. 
\end{remark} 

\begin{remark}
From the proof of Theorem~\ref{thm:short char sum} in 
Section~\ref{sec:proof char}, one can see that one could enlarge the range of $y$ in~\eqref{eq: range y} by replacing  $20\lambda$ with $7.721\lambda$ there,  
at the cost of increasing the size of the exceptional 
 set of natural numbers  from  $O(x^{0.49})$ to  $O(x^{1-\delta})$ for some small $\delta>0$.  On the other hand,  one can decrease the size of the
 exceptional set of natural numbers to $O(x^{c/K})$ (with an absolute constant $c > 0$) by replacing $20\lambda$ with $K\lambda$ in~\eqref{eq: range y} for $K$ a large constant. For our application, it is  helpful to have an exponent smaller than $1/2$. 
\end{remark} 

We remark that a result of Garaev~\cite[Theorem~9]{Gar} gives a  power-saving bound for the character sums in~\eqref{eq:charsum} 
for almost all moduli $q \le  x$, provided that 
$$y = \exp\(c \sqrt{\log x}\)$$
for a suitable constant $c > 0$. In fact,~\cite[Theorem~10]{Gar} gives more flexibility 
for larger values of $y$. Here we are mostly interested in small values 
of $y$, which are not covered by the results of~\cite{Gar}.

\subsection{Notation}
\label{sec:not} 

We recall that  the notations $U = O(V)$, $U \ll V$ and $ V\gg U$  
are equivalent to $|U|\leqslant c V$ for some positive constant $c$, 
which we take to be absolute unless indicated with subindices. 
For example, $O_D$ and $\ll_D$ both mean the the implied constant 
may depend on the parameter $D$.
We use $U\asymp V$ as a shorthand for $U\ll V\ll U$.  

The letter $p$, with or without subscripts, always denotes a prime number.

We use $\sum^\ast_{r\pmod q}$ to denote summation over the primitive residue classes modulo $q$. 

We denote by $\mathcal{X}_q$ the set of Dirichlet characters $\pmod q$ and by $\mathcal{X}_q^{*}$ the set of primitive Dirichlet characters $\pmod q$.

We use the standard notation $\mu(n)$, $\varphi(n)$ and $\Omega(n)$ for the M{\"o}bius function, the Euler function, and the number of prime divisors function (counted with multiplicities),  respectively. Furthermore, $\pi(x)$ denotes the number of primes $p \le x$.

Finally, we use $\# \cS$ to denote the cardinality of a finite set $\cS$.

\subsection{Acknowledgments}
The authors thank Kaisa Matom\"aki for a helpful discussion on character sums and the referee
for the careful reading of the manuscript. 

  During the preparation of this work  
I.S.\ was  supported by the Australian Research Council Grants  DP230100530 and DP230100534 and by a 
Knut and Alice Wallenberg Fellowship.  
J.T. was supported by European Union's Horizon
Europe research and innovation programme under Marie Sk\l{}odowska-Curie grant agreement No. 101058904, and Academy of Finland grant No.~362303.  

This work started while the authors were visiting   Institut Mittag-Leffler, Sweden, 
during the programme `Analytic Number Theory' in January--April of 2024, 
whose hospitality and support are gratefully acknowledged.

\section{Proof of the character sum bound} 
\label{sec:proof char}

\subsection{Preliminaries}
In this section, we prove Theorem~\ref{thm:short char sum}. We begin with a useful estimate for the number of integers with a given number of prime factors.

\begin{lemma}\label{le:nicolas} Let $x\geq 3$ and let $m$ be a positive integer. 
Then we have
\begin{itemize}
\item[(i)] for $3   \log \log x\le m \le   \(\log x\)/\log 2$,
$$
\frac{x}{2^m\log x}\ll \#\{n\le  x\colon~\Omega(n)=m\}\ll \frac{x}{2^m}\log \frac{x}{2^{m}} + 1; 
$$ 
\item[(ii)] for any $m \le   (\log x)/\log 2$,
$$
\#\{n\le  x\colon~\Omega(n)=m\}\gg \frac{x}{2^m\log x}.     
$$ 
\end{itemize}
\end{lemma}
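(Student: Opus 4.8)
The plan is to treat the two one-sided bounds separately. The lower bound in (ii), which also supplies the lower bound in (i), is elementary: given $m\le(\log x)/\log 2$, set $y:=x/2^m\ge 1$. If $y\ge 2$, then each odd prime $p\in(y,2y]$ yields an integer $n:=2^{m-1}p\in(x/2,x]$ with $v_2(n)=m-1$, hence $\Omega(n)=m$, and distinct primes give distinct $n$; therefore
$$\#\{n\le x:\Omega(n)=m\}\ \ge\ \pi(2y)-\pi(y)\ \gg\ \frac{y}{\log y}\ \ge\ \frac{y}{\log x}\ =\ \frac{x}{2^m\log x}$$
by Chebyshev's estimates. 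If $1\le y<2$, then $x/(2^m\log x)=y/\log x\ll 1$ while $n=2^m\le x$ already forces the count to be $\ge 1$, and the finitely many remaining small $x$ are absorbed into the implied constant.

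For the upper bound in (i) I would quote the uniform estimates of Nicolas for $\#\{n\le t:\Omega(n)=k\}$, valid throughout $1\le k\le(\log t)/\log 2$, which give the claim at once. For a more self-contained route, write $n=2^{m-i}u$ with $u:=n/2^{v_2(n)}$ odd and $i:=\Omega(u)$; since the least odd integer with $i$ prime factors is $3^i$, one needs $3^i\le u\le 2^iy$, so $i$ runs only over $0\le i\le(\log y)/\log(3/2)$ and
$$\#\{n\le x:\Omega(n)=m\}\ \le\ \sum_{0\le i\le(\log y)/\log(3/2)}\#\bigl\{u\le 2^iy:\ u\ \text{odd},\ \Omega(u)=i\bigr\}.$$
For $i$ below a fixed multiple of $\log\log(2^iy)$, bound the inner count by the Hardy--Ramanujan/Sathe--Selberg estimate $\ll\tfrac{2^iy}{\log(2^iy)}\cdot\tfrac{(\log\log(2^iy)+C)^{i-1}}{(i-1)!}$; since $\log(2^iy)\ge\log y$ and $\log\log(2^iy)=\log\log y+O(1)$ over the whole range of $i$, this part is $\ll\tfrac{y}{\log y}\sum_i 2^i\tfrac{(\log\log y+O(1))^{i-1}}{(i-1)!}\ll\tfrac{y}{\log y}(\log y)^2=y\log y$. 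For the larger $i$ one needs a bound of the shape $\ll\tfrac{2^iy}{3^i}\bigl(\log\tfrac{2^iy}{3^i}+1\bigr)$ — this is exactly the present lemma with the prime $3$ replacing $2$, which follows from Nicolas's results for integers prime to $2$ (or, inductively, by the same method) — after which $2^iy/3^i=(2/3)^iy$ makes the sum geometric and it contributes $\ll y\log y+1$. Adding the two ranges gives $\ll y\log y+1=\tfrac{x}{2^m}\log\tfrac{x}{2^m}+1$.

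The crux, and the only non-routine ingredient, is the uniform control of $\#\{u\le Y:\Omega(u)=i\}$ (or of its sifted analogues) across the \emph{entire} range $1\le i\le(\log Y)/\log 2$: for moderate $i$ one is in the Sathe--Selberg range, whereas for $i$ near $(\log Y)/\log 2$ the count is dominated by integers of the form $2^{\,i-O(\log\log Y)}$ times a small cofactor, and the naive Rankin inequality $\#\{u\le Y:\Omega(u)=i\}\le z^{-i}\sum_{u\le Y}z^{\Omega(u)}$ loses a logarithm (or a factor of $i$) as $z\to 2^-$. Interpolating between these regimes with the sharp power of $\log$ is precisely what Nicolas's theorem supplies, so I would invoke it rather than reprove it, the remainder of the argument being bookkeeping.
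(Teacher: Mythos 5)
Your proof is correct, and both you and the paper ultimately rest the upper bound in (i) on Nicolas's theorem — your ``more self-contained'' route does not actually escape it, since for the large-$i$ range it reduces to the same lemma with the prime $3$ in place of $2$, which, as you acknowledge, needs Nicolas (or an equivalent argument) all over again. The genuine divergence is your treatment of the lower bound (ii). The paper splits the range of $m$ into three pieces: the sliver $(\log x)/\log 2 - 1/10 < m \le (\log x)/\log 2$ where the set is exactly $\{2^m\}$; the middle range $3\log\log x \le m \le (\log x)/\log 2 - 1/10$ handled by Nicolas; and $m < 3\log\log x$ handled by a direct count of integers of the form $2^{m-1}p$. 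You instead give a single elementary argument — odd primes $p\in(y,2y]$ with $y=x/2^m$ give distinct $n=2^{m-1}p\le x$ with $\Omega(n)=m$, and the corner case $1\le y<2$ is absorbed by $n=2^m$ — which covers all $m\le(\log x)/\log 2$ uniformly, needs only Chebyshev/Bertrand, and avoids Nicolas entirely for (ii). That is a cleaner route to the lower bound than the paper's three-way case split, while the upper bound is, in substance, the same citation.
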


\begin{proof}
If $(\log x)/\log 2-1/10<m\le (\log x)/\log 2$, then clearly $$\{n\le  x\colon~\Omega(n)=m\} =  \{2^m\}.$$ 
Hence the cardinality to be estimated is equal to $1$, so both  bounds (i) and~(ii) are trivial.

 For $3\log \log x\le  m\le (\log x)/\log 2-1/10$,  the  bound~(i), and thus~(ii) in this range follows from a result of 
 Nicolas~\cite{nicolas} (see also~\cite[Part~II, Equation~(6.30) and Exercise~217]{Ten}, which in fact gives an asymptotic formula
\begin{equation}\label{eq:Omega=m}
 \#\{n\le  x\colon~\Omega(n)=m\} = \(C  + O((\log y)^{-\eta})\) y \log y
\end{equation}
where $y = x/2^m$, $\eta>0$ is some absolute constant,  and the constant $C = 0.3786 \ldots$ is given by an explicit Euler product (note that $y \ge 2^{1/10} > 1$). Indeed, we note that~\eqref{eq:Omega=m} always gives the desired upper bound (even in the regime when $y \ll 1$. To extract a lower bound from~\eqref{eq:Omega=m}  we need to assume that $y \to \infty$. However since  $2^m \in  \{n\le  x\colon~\Omega(n)=m\}$ in the above range of $m$, for 
$y < \log \log x$ the lower bound is trivial, so we indeed get~(i).

Finally, to establish the lower bound~(ii) for $m < 3\log \log x$, we note that in this range 
\begin{align*}
\#\{n\le  x\colon~\Omega(n)=m\}  &\ge \#\{2^{m-1} p\colon~ p \le x/2^{m-1}\} \\
&= \pi(x/2^{m-1}) \gg \frac{x}{2^m\log (x/2^m)} \gg  \frac{x}{2^m\log x}, 
\end{align*}
which concludes the proof. 
 \end{proof}
 
 \begin{remark}
We remark that the coefficient $3$ in Lemma~\ref{le:nicolas}~(i) can be replaced with $2 + \delta$ with any fixed $\delta > 0$. 
\end{remark} 

\subsection{Proof of Theorem~\ref{thm:short char sum}}
We may assume that $x$ is larger than any given absolute constant.  We may also assume that $y\le  x^{0.34}$, say, since otherwise the result follows from the Burgess bound~\cite[Theorem~12.4]{IwKow}. By applying Garaev's result~\cite[Theorem~10]{Gar} with $\delta=1/10$, we may further assume that 
\begin{equation}\label{eq:ybound}
y\le  \exp((\log x)^{0.51}),     
\end{equation}
say.

For $w\in \mathbb{N}$, let $$\cA_w(y)=\{a\le  y\colon~\Omega(a)=w\}.$$ 
For $k\in \mathbb{N}$, we also denote 
$$
S_k(w, x, y) = \sum_{q \le  x} \max_{\chi \in \cX_q^*} 
\left|\sum_{a \in \cA_w(y)} \chi(a)\right|^{2k}.
$$

We first give a bound for $S_k(w,x,y)$. By replacing the maximum with summation over all primitive characters and applying the multiplicative large sieve~\cite[Theorem~7.13]{IwKow}, we have
\begin{equation}
\label{eq: S and T}
S_k(w,x,y)\le  \sum_{q \le  x} \sum_{\chi \in \cX_q^*} 
\left|\sum_{a \in \cA_w(y)} \chi(a)\right|^{2k}\ll(x^2+y^k)T_k(w,y),
\end{equation}
where 
$$
T_k(w, y) = \sum_{m\le  y^k}r_k(m,w,y)^2
$$ 
is the number of solutions to the equation 
$$
a_1\cdots a_k = b_1\cdots b_k, \qquad  a_1, b_1, \ldots, a_k, b_k  \in \cA_w(y). 
$$
By considering the number of choices for $(a_1,\ldots, a_k)$ for a given $k$-tuple $(b_1,\ldots, b_k)\in \cA_w(y)^k$, we have
$$
T_k(w, y) \le  \binom{kw} {\, \underbrace	{w, \ldots, w}_{k\ \text{times}}\, } \( \# \cA_w(y)\)^k,
$$
since each choice of $a_1, \ldots, a_k$
corresponds to partitioning the multi-set of the $kw$ (not necessary distinct) primes dividing $b_1\cdots b_k$ into 
$k$ groups of $w$ primes.
Using the  elementary inequalities 
$$
(n/e)^n \le n! \le  e^2 (n/e)^{n+1},
$$
we see that
$$
\binom{kw} {\, \underbrace	{w, \ldots, w}_{k\ \text{times}}\, } = \frac{(kw)!}{(w!)^k} \le e^2 \frac{(kw/e)^{kw + 1}}{ (w/e)^{kw}}
=  e  w k^{kw+1}.
$$ 

Therefore 
$$
T_k(w, y) \ll w k^{kw+1}  \( \# \cA_w(y)\)^k , 
$$
which after the substitution in~\eqref{eq: S and T} implies
\begin{equation}
\label{eq: Bound S}
S_k(w, x, y)  \ll  w k^{kw+1} \(x^2 + y^k\) \( \# \cA_w(y)\)^k. 
\end{equation}

We now define the integer $k\ge 1$ by the inequalities 
\begin{equation}\label{eq:kbound}
y^{k} < x^2 \le y^{k+1}.
\end{equation}
For a given  $\delta >0$, we consider the set
$$
\cE(w, x, y; \delta) = \left\{q\le  x\colon
\max_{\chi \in \cX_q^*} 
\left|\sum_{a \in \cA_w(y)} \chi(a)\right| \ge \delta\cdot \# \cA_w(y) \right\}.
$$
From~\eqref{eq: Bound S} and our choice of $k$, we derive 
\begin{equation}
\label{eq: E kwy delta}
\begin{split}
\#\cE(w, x, y; \delta) & \ll  \(\# \cA_w(y) \delta\)^{-2k}  y^{k+1} \( \# \cA_w(y)\)^k w k^{kw+1}\\
 & \ll  kwy \( \frac{k^w y}{ \delta^{2} \# \cA_w(y)}\)^k .
\end{split}
\end{equation}
We impose the restriction
\begin{equation}
\label{eq: small w}
w \le   \lambda \log \log y.
\end{equation}
Note that by the assumption~\eqref{eq: range C}  on $\lambda$ we have $w\le  (\log y)/(\log \log y)$. 
By Lemma~\ref{le:nicolas}~(ii), under the condition~\eqref{eq: small w} we have $$
\# \cA_w(y)  \gg \frac{y}{2^w} \frac{1}{\log y}.
$$ 
Hence, taking $\delta =  (\log y)^{-\lambda\log 2}$ in the bound~\eqref{eq: E kwy delta}, we obtain
\begin{equation}
\label{eq: E kwyB}
\# \cE(w, x, y; (\log y)^{-\lambda\log 2})  \ll  kwy \(  k^w  (\log y)^{4\lambda\log 2}\)^k , 
\end{equation}

Next, recalling the choice of $k$ from~\eqref{eq:kbound}, we  see that 
\begin{equation}
\label{eq: k asymp}
 2\frac{\log x}{\log y} -1 \le k  < 2\frac{\log x}{\log y}.
\end{equation}  
Hence, we derive from~\eqref{eq: E kwyB} that 
$$
\#  \cE(w, x, y;(\log y)^{-\lambda\log 2})  \ll  x^{\xi}, 
$$
where, using the upper bound on $k$ from~\eqref{eq: k asymp}, we conclude
$$ \xi  =   2\frac{w  \log \log x  + 4\lambda(\log 2)(\log \log y)}{\log y} + O\left(\frac{\log (kwy)}{\log x}\right).$$
Recalling our assumptions~\eqref{eq: range C}, \eqref{eq:ybound}, \eqref{eq: small w}  and the estimate~\eqref{eq: k asymp} again,  we see that 
$$
 \frac{\log (kwy)}{\log x}\ll  \(\log x\)^{-0.48}.
$$
Now, from~\eqref{eq: range C},~\eqref{eq: small w}, and then also from~\eqref{eq: range y}, we derive
\begin{align*}
\xi & \le   2     \frac{w  \log  \log x  +  4\lambda (\log 2)(\log \log y)}{\log y}+ O\(\(\log x\)^{-0.48}\) \\
& \le   2 \lambda  \frac{\(\log  \log x\)\(\log \log y\)}{\log y}+ O\(\(\log x\)^{-0.48}+\(\log \log y\)^{-1}\)  \\
& \le 2\lambda\frac{\log(20\lambda\log(\lambda\log \log x))+\log \log \log x}{20\lambda\log(\lambda \log \log x)}\\
& \qquad  \qquad  \qquad  \qquad  \qquad  \qquad +  O\(\(\log x\)^{-0.48}+ \(\log \log y\)^{-1}\)\\
&\le \frac{2\log(20\lambda)}{20\log \lambda}+\frac{2\log \log (\lambda \log \log x)}{20\log(\lambda \log \log x)}+\frac{2}{20}+\frac{1}{1000}\\
&\le \frac{2\log 60}{20\log 3}+\frac{1}{1000}+\frac{2}{20}+\frac{1}{1000}\\
&\le 0.485,
 \end{align*}
provided that $x$ (and hence $y$) is large enough in absolute terms.
This implies that
\begin{equation}
\label{eq: Ew-Bound}
\# \cE(w, x, y; (\log y)^{-\lambda\log 2})  \le x^{0.485} 
\end{equation}
for each $w$ satisfying~\eqref{eq: small w} and any $x$ that is large enough in absolute terms. 

It now remains to estimate the quantity
$$
F_{\lambda}(y) = \#\{a\le  y\colon~\Omega(a) > \lambda \log \log y\}.
$$

Using Lemma~\ref{le:nicolas}~(i), which applies since by~\eqref{eq: range C}, 
we have  $\lambda\ge 3$,  we derive
\begin{equation}\begin{split}
\label{eq: F-Bound}
F_{\lambda}(y)  &\le    \sum_{\lambda\log \log y< m\le  (\log y)/\log 2}\#\{a\le  y\colon~\Omega(a)=m\}\\
&\ll y\log y\sum_{m>\lambda\log \log y}2^{-m} +\log y\\
&\ll  y(\log y)^{1-\lambda\log 2} + \log y \ll   y(\log y)^{1-\lambda\log 2},
\end{split}
\end{equation}
since under~\eqref{eq: range C} we clearly have the bound $(\log y)^{\lambda\log 2} = y^{o(1)}$.

  Let
 $$
 \cE(x, y) = \bigcup_{0\le w \le \lambda \log \log y}  \cE(w, x, y;(\log y)^{-\lambda\log 2}) , 
 $$
which by~\eqref{eq: Ew-Bound} and~\eqref{eq: range C}  is of cardinality 
\begin{align*}
\#  \cE(x, y) & \le     x^{0.485} \lambda \log \log y \\
& \le
 x^{0.485}   \frac{\log y}{\log \log y} 
\ll  x^{0.49}.
\end{align*}
 Then, for any natural numbers $w \le \lambda \log \log y$ and 
 $q \le x$ with $q \not \in  \cE(x, y)$, we have
 \begin{equation}\label{eq:chimax}
 \max_{\chi\in \mathcal{X}_q^{*}}\left|\sum_{a \in \cA_w(y)} \chi(a)\right| \ll y /(\log y)^{\lambda\log 2}. 
 \end{equation}
 Writing 
$$
\left|\sum_{a \le y} \chi(a) \right|  \le \sum_{0\le w \le \lambda \log \log y}  \left|\sum_{a \in \cA_w(y)} \chi(a)\right| + F_{\lambda}(y)
$$
and using the bounds~\eqref{eq: F-Bound} and~\eqref{eq:chimax}, and the assumed upper bound in~\eqref{eq: range C}  for $\lambda$,
we  conclude the proof.

\section{Proof of Theorem~\ref{thm:Artin Aver}} 

\subsection{Moments of \texorpdfstring{$N_a(x)$}{Na(x)}} 
The main task in this section is to prove the following second moment estimate.

\begin{prop}\label{prop:L2} Let the assumptions be as in Theorem~\ref{thm:Artin Aver}. Then we have
$$
\sum_{|a|\le  y}\left|N_a(x)-A\pi(x)\right|^2\ll_D y\frac{\pi(x)^2 }{(\log x)^{3D}}.
$$
\end{prop}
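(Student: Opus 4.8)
The plan is to follow the classical Stephens--Goldfeld method of expressing the indicator that $a$ is a primitive root modulo $p$ via a sum over divisors of $p-1$, expanding the square, and then reducing the off-diagonal contribution to character sums to which Theorem~\ref{thm:short char sum} can be applied. First I would recall that, for $p\nmid a$, one has the identity
$$
\mathbf 1[a \text{ is a primitive root mod } p] \;=\; \sum_{d \mid p-1} \frac{\mu(d)}{\varphi(d)}\sum_{\ord(\chi)=d}\chi(a),
$$
the inner sum running over Dirichlet characters mod $p$ of exact order $d$. Summing over $p\le x$ and isolating the principal-character term $d=1$ gives the main term $\sum_{p\le x}\prod_{\ell\mid p-1}(1-\tfrac{1}{\ell-1})$, which by a standard sieve/Mertens computation equals $A\pi(x)+O(\pi(x)(\log x)^{-D'})$ for the relevant range; so it suffices to bound the second moment of the remaining sum
$$
E_a(x) \;=\; \sum_{p\le x}\;\sum_{\substack{d\mid p-1\\ d>1}}\frac{\mu(d)}{\varphi(d)}\sum_{\ord(\chi)=d}\chi(a),
$$
plus negligible terms accounting for $p\mid a$ and the square/higher-power values of $a$ (the set of $a\in[-y,y]$ which are squares or for which the squarefree part is $\equiv 1\pmod 4$ with small prime factors is $O(y^{1/2})$, hence absorbed).

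Next I would expand $\sum_{|a|\le y}|E_a(x)|^2$, obtaining a double sum over primes $p,p'\le x$ and over characters $\chi\bmod p$, $\chi'\bmod{p'}$ of orders $d,d'>1$, weighted by $\mu(d)\mu(d')/(\varphi(d)\varphi(d'))$, of the quantity $\sum_{|a|\le y}\chi(a)\overline{\chi'(a)}$. The character $\chi\overline{\chi'}$ is a character to the modulus $q=\mathrm{lcm}(p,p')$ (equal to $pp'$ unless $p=p'$), and it is nonprincipal and in fact primitive of conductor dividing $q$ except precisely when $\chi=\chi'$ (forcing $p=p'$, $d=d'$); that diagonal contributes $\ll \sum_{p\le x}\sum_{d\mid p-1, d>1}\frac{d}{\varphi(d)^2}\cdot y \ll y\,\pi(x)$ after summing the Euler-phi weights, which is far smaller than $y\pi(x)^2(\log x)^{-3D}$. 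For the off-diagonal I would split according to whether $q=\mathrm{lcm}(p,p')$ lies outside or inside the exceptional set $\cQ(x^2,y)^{c}$ of Theorem~\ref{thm:short char sum}; since the number of exceptional moduli up to $x^2$ is $O(x^{0.98})$ and each is hit by $O(x^{o(1)})$ pairs $(p,p')$, while trivially $|\sum_{|a|\le y}\chi\overline{\chi'}(a)|\le 2y$ and the character weights are bounded, the exceptional-$q$ contribution is $O(x^{0.99}y)$, again negligible. For the main, generic range of $q$, Theorem~\ref{thm:short char sum} applied with the modulus $x^2$ in place of $x$ and with $\lambda$ chosen as in~\eqref{eq:Cdef} — roughly $\lambda \asymp (D+1)/\log 2$, which is admissible precisely because~\eqref{eq:ylower} encodes the lower bound~\eqref{eq: range y} with this $\lambda$ — gives $|\sum_{|a|\le y}\chi\overline{\chi'}(a)|\ll y/(\log y)^{\lambda\log 2-1} \ll y/(\log x)^{D+1}$ after a short computation relating $\log y$ and $\log x$ via~\eqref{eq:ylower}. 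Summing over $p,p'\le x$ and over $d\mid p-1$, $d'\mid p'-1$ with the absolutely convergent weights $\sum_{d}|\mu(d)|/\varphi(d)\ll \log\log p$, the total is $\ll (\pi(x)\log\log x)^2\cdot y/(\log x)^{2D+2} \ll y\,\pi(x)^2/(\log x)^{3D}$, as desired.

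The main obstacle — and the place where real care is needed — is the bookkeeping that allows Theorem~\ref{thm:short char sum} to be invoked with a single value of $\lambda$ uniformly over all the characters $\chi\overline{\chi'}$ appearing: one must verify that $\chi\overline{\chi'}$ is \emph{primitive} to its conductor $q_0\mid \mathrm{lcm}(p,p')$ (handling the cases $p=p'$ with $d\ne d'$, and $q_0$ a proper divisor of $pp'$, without losing the saving), and that the exceptional set in Theorem~\ref{thm:short char sum} for modulus-bound $x^2$ still only costs $O((x^2)^{0.49})=O(x^{0.98})$ moduli, each arising from few pairs $(p,p')$. A secondary technical point is ensuring the truncation $d\le$ (something like $\log x$) in the divisor sum is legitimate — large $d$ contribute characters of large order but the weight $\mu(d)/\varphi(d)$ and the count of such characters keep the tail under control — and that the main-term extraction genuinely produces Artin's constant $A$ with error $o(\pi(x)(\log x)^{-3D/2})$; both are standard but must be done with the explicit range~\eqref{eq:ylower} in hand so that the constant $60(D+1)/\log 2$ there matches the $\lambda$ chosen here.
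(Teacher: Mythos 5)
Your high-level plan — detect primitive roots by characters, separate the principal-character main term, and bound the second moment of the remainder by expanding and invoking Theorem~\ref{thm:short char sum} for moduli up to $x^2$ — is indeed the method of the paper (the paper reorganizes it slightly by first proving a first-moment estimate and then expanding $\sum_a N_a(x)^2$, but that is cosmetic). However, there are three genuine gaps in the details, and the last one is fatal as written.

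First, the detector identity you quote is missing the normalizing factor $\tfrac{\varphi(p-1)}{p-1}$: the correct statement (equation~\eqref{eq:primroot} of the paper) is
$$
\mathbf 1[a\in\cG_p] \;=\;\frac{\varphi(p-1)}{p-1}\sum_{t\mid p-1}\frac{\mu(t)}{\varphi(t)}\sum_{\ord(\chi)=t}\chi(a).
$$
Consequently the main term after isolating $t=1$ is $\sum_{p\le x}\tfrac{\varphi(p-1)}{p-1}=\sum_{p\le x}\prod_{\ell\mid p-1}\bigl(1-\tfrac1\ell\bigr)$, \emph{not} $\sum_{p\le x}\prod_{\ell\mid p-1}\bigl(1-\tfrac{1}{\ell-1}\bigr)$; the latter vanishes for every odd prime $p$ because $\ell=2\mid p-1$ kills the product. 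The correct main term is evaluated by Stephens' Lemma~1 (equation~\eqref{eq:phi} in the paper).

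Second, your bookkeeping of the character-sum weights is off in a way that loses a full power of $\log x$ per prime. For each $d\mid p-1$ there are $\varphi(d)$ characters of order $d$, so after applying the uniform bound on $\bigl|\sum_a\chi\overline{\chi'}(a)\bigr|$ the weight from a single prime $p$ is $\sum_{d\mid p-1}\tfrac{|\mu(d)|}{\varphi(d)}\cdot\varphi(d)=2^{\omega(p-1)}$, not $\sum_d\tfrac{|\mu(d)|}{\varphi(d)}\ll\log\log p$. Summing this over $p\le x$ requires the Titchmarsh divisor bound $\sum_{p\le x}2^{\omega(p-1)}\ll x$ (equation~\eqref{eq:titchmarsh}), an essential ingredient you omit; on average $2^{\omega(p-1)}$ is of size $\log x$, not $\log\log x$.

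Third, your choice of $\lambda$ and the resulting numerology do not close. You paraphrase~\eqref{eq:Cdef} as ``roughly $\lambda\asymp(D+1)/\log 2$''; but the crucial point of~\eqref{eq:Cdef} is the factor $\tfrac{\log\log(x^2)}{\log\log y}$, which is \emph{large} (of order $\tfrac{\log\log x}{\log\log\log x}$) in the relevant range $y\approx\exp((\log\log x)^2)$, and it is exactly this factor that upgrades the saving $(\log y)^{\lambda\log 2-1}$ from Theorem~\ref{thm:short char sum} into a power of $\log x$, namely $(\log x)^{3D+2}$. With $\lambda\asymp(D+1)/\log 2$ you would only get $(\log y)^{D}$, which is nowhere near $(\log x)^{D+1}$. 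Even granting your claimed saving, your final step $(\pi(x)\log\log x)^2\cdot y/(\log x)^{2D+2}\ll y\,\pi(x)^2/(\log x)^{3D}$ requires $(\log\log x)^2\ll(\log x)^{2-D}$, which is false for $D\ge 3$. With the corrected weights ($2^{\omega(p-1)}$, bounded via Titchmarsh, contributing a total $\ll x$ per prime variable rather than $\pi(x)\log\log x$), one needs a saving of $(\log x)^{3D+2}$ from the character sum to end up with $y\,\pi(x)^2/(\log x)^{3D}$ — which is precisely what the choice~\eqref{eq:Cdef} delivers. This explains why~\eqref{eq:ylower} carries the constant $60(D+1)/\log 2$ rather than something of size $D+1$.

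A smaller point worth flagging: the case $p=p'$, $\chi\ne\chi'$ produces a primitive character modulo $p$ (not $pp'$); the paper simply excludes $p_1=p_2$ from the set $\cQ$ and absorbs its contribution, together with pairs involving small or exceptional primes, into the error $O(yx^{1.98})$. Your ``diagonal'' discussion only treats $\chi=\chi'$ and leaves this case implicit.
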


The deduction of Theorem~\ref{thm:Artin Aver} from this is immediate via  Chebyshev's inequality.

For the proof of Proposition~\ref{prop:L2}, we establish the following mean value estimate.

\begin{prop}\label{prop:L1}
Let the assumptions be as in Theorem~\ref{thm:Artin Aver}. Then we have
$$
\sum_{|a|\le  y}N_a(x)=2Ay\pi(x)+O_D\left(y\frac{\pi(x)}{(\log x)^{3D}}\right).
$$    
\end{prop}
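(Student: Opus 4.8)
The plan is to sum the standard characterization of primitive roots via the M\"obius function over divisors of $p-1$. Recall that for a prime $p$ and an integer $a$ with $p \nmid a$, the element $a$ is a primitive root modulo $p$ if and only if $\sum_{d \mid p-1} \frac{\mu(d)}{?}$ detects the full order; more precisely, one writes
$$
\mathbf{1}[a \text{ is a primitive root mod } p] = \sum_{d \mid p-1} \mu(d)\, \frac{1}{d}\sum_{\substack{\chi \bmod p \\ \chi^{d} = \chi_0}} \chi(a) \cdot \frac{d}{\varphi(p-1)}\cdot(\dots),
$$
but it is cleaner to use the classical identity $\mathbf{1}[\ord_p(a) = p-1] = \frac{\varphi(p-1)}{p-1}\sum_{d\mid p-1}\frac{\mu(d)}{\varphi(d)}\sum_{\ord(\chi)=d}\chi(a)$. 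Summing over $|a| \le y$ with $p \nmid a$ and over $p \le x$, the diagonal term $d=1$ (i.e.\ $\chi = \chi_0$) contributes $\sum_{p\le x}\frac{\varphi(p-1)}{p-1}\cdot(2y + O(1))$, and the average of $\varphi(p-1)/(p-1)$ over primes $p\le x$ is $A + o(1)$ by a classical result (this is exactly how Artin's constant arises); this yields the main term $2Ay\pi(x)$. The task is then to bound the contribution of the non-principal characters.

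First I would set up the exact expansion carefully, separating the $d=1$ term as above and writing the error as $\ll y\sum_{p \le x}\frac{1}{p-1}\sum_{d\mid p-1, d>1}\frac{1}{\varphi(d)}\bigl|\sum_{\ord(\chi)=d}\frac{1}{2y}\sum_{|a|\le y, p\nmid a}\chi(a)\bigr|$ plus a negligible term accounting for $p \mid a$. The inner sum over $a \in [-y,y]$ of $\chi(a)$ is essentially (using $\chi(-1) = \pm 1$) a sum $\sum_{1\le a \le y}\chi(a)$ of a Dirichlet character modulo $p$. The crucial point is that for a fixed $d \mid p-1$, a character of exact order $d$ modulo $p$ is \emph{primitive} modulo $p$, so Theorem~\ref{thm:short char sum} applies: for all but $O(x^{0.49})$ moduli $q \le x$ — and in particular for all but $O(x^{0.49})$ primes $p \le x$ — we have $\max_{\chi \in \cX_p^*}|\sum_{1\le a\le y}\chi(a)| \ll y/(\log y)^{\lambda\log 2 - 1}$, once $y$ satisfies~\eqref{eq: range y} with a suitably chosen $\lambda$. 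I would choose $\lambda$ (as flagged by the remark, via the forthcoming~\eqref{eq:Cdef}) growing slowly with $x$ so that $(\log y)^{\lambda \log 2 - 1} \gg (\log x)^{3D+1}$ or so, while keeping $y$ in the admissible range — this is where the hypothesis~\eqref{eq:ylower} on $y$ in Theorem~\ref{thm:Artin Aver} gets used, matching it against~\eqref{eq: range y}.

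For the non-exceptional primes $p$, summing the bound over $d \mid p-1$, $d>1$, costs a factor $\sum_{d\mid p-1}\frac{1}{\varphi(d)} \ll \log\log p$ (since there are at most $\varphi(d)$ characters of order $d$, so the $\chi$-sum over fixed order $d$ is $\le \varphi(d)$ times the max, and the $\frac{1}{\varphi(d)}$ cancels, leaving $\tau$-type savings — I should be slightly careful here and may instead bound $\sum_{\ord\chi = d}|{\cdot}| \le \varphi(d)\cdot \max$, giving $\sum_{d\mid p-1}1 = \tau(p-1)$, which is $x^{o(1)}$ on average and certainly acceptable after the power of $\log y$ saving). Thus the non-exceptional primes contribute $\ll y\pi(x)\cdot \tau(p-1)_{\mathrm{avg}} \cdot (\log y)^{-(\lambda\log 2 -1)} \ll y\pi(x)(\log x)^{-3D}$ with the right choice of $\lambda$. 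For the exceptional set of $O(x^{0.49})$ primes $p$, I bound the inner character sum trivially by $\ll y\tau(p-1)/\varphi(\text{stuff})$, or even more crudely note $N_a(x) \le \pi(x)$ termwise isn't enough — instead I use that for any modulus the full contribution of a single prime $p$ to $\sum_{|a|\le y}$ is $\ll y$, so the exceptional primes contribute $\ll y \cdot x^{0.49} \ll y\pi(x)(\log x)^{-3D}$, which is fine. The main obstacle I anticipate is the bookkeeping in the first paragraph: getting the precise M\"obius/character identity for primitive roots, correctly handling the condition $p \nmid a$ (which removes $O(\pi(x))$ from the count, negligible), the sign $\chi(-1)$, and above all verifying that the choice of $\lambda$ simultaneously satisfies~\eqref{eq: range C}, makes~\eqref{eq: range y} follow from~\eqref{eq:ylower}, and produces a saving of at least $(\log x)^{3D}$ — this calibration of $\lambda$ against $D$ is the technical heart of the argument.
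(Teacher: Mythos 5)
Your proposal is correct and follows essentially the same route as the paper: detect primitive roots via the M\"obius/character identity, extract the main term from the principal character using the average of $\varphi(p-1)/(p-1)$ over primes (the paper cites Stephens' Lemma~1 for this, i.e.~\eqref{eq:phi}), apply Theorem~\ref{thm:short char sum} to the primitive characters modulo $p$ with a $\lambda$ calibrated as in~\eqref{eq:Cdef}, handle the $O(x^{0.49})$ exceptional primes trivially, and absorb the divisor-sum factor via a Titchmarsh-type estimate. The only cosmetic difference is that the paper tracks $\sum_{t\mid p-1,\,t>1}|\mu(t)| = 2^{\omega(p-1)}-1$ and invokes~\eqref{eq:titchmarsh}, whereas you use the slightly lossier $\tau(p-1)$; both are comfortably absorbed by the $(\log y)^{\lambda\log 2 -1}$ saving, so the argument goes through identically.
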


\begin{proof}
Let us fix some $D > 10$. We may assume that $x,y$ are larger than any given absolute constant.
We are going to use Theorem~\ref{thm:short char sum}  with 
\begin{equation}\label{eq:Cdef}
\lambda=\frac{3D+2}{\log 2} \cdot\frac{\log \log (x^2)}{\log \log y} + \frac{1}{\log 2}.
\end{equation}
Note that by~\eqref{eq:ylower} the condition~\eqref{eq: range C} holds. 
Also note that for this choice of $\lambda$ we have
$$
y>\exp( 20\lambda \log(\lambda (\log \log (x^2)))\log \log(x^2)).
$$
Now, by Theorem~\ref{thm:short char sum},
we see that there is a set $\mathcal{E}$ of size $\#\mathcal{E}\ll x^{1/2}$ (say) such that for all primes $p\in [1,x]\setminus \mathcal{E}$ we have
\begin{equation}
\label{eq: small Spy}
 \max_{\chi \in \cX_p^*} 
\left|\sum_{1 \le a \le y} \chi(a)\right|\le y / \(\log x\)^{3D+2}.
\end{equation}  
Let $$\mathcal{P}=\{p\le  x\}\setminus \mathcal{E}.$$

Let us denote by $\cG_p$ the set of primitive roots modulo $p$. We can write 
\begin{equation}
\label{eq: Good Primes}
\begin{split}
\sum_{-y \le a \le y} N_a(x) & = \sum_{-y \le a \le y} \, \sum_{\substack{p\le x\\ a\in \cG_p}} 1 = 
\sum_{p\le x} \, \sum_{\substack{-y \le a \le y\\a \in \cG_p}} 1.\\
&= 
\sum_{p\in \mathcal{P}} \, \sum_{\substack{-y \le a \le y\\a \in \cG_p}} 1+O(x^{1/2}y).
\end{split}
\end{equation}

Using the standard inclusion-exclusion argument to detect primitive roots (see, for example,~\cite[Problem~5.14]{LN} or~\cite[Proposition~2.2]{Nark}), we see that 
for any integer $a$ we have
 \begin{equation}\label{eq:primroot}
\frac{\varphi(p-1)}{p-1} \sum_{t \mid p-1} \frac{\mu(t) }{ \varphi(t)} 
 \sum_{\substack{\chi \in \cX_p \\ \ord(\chi) = t}} \chi(a) = 
  \begin{cases} 1 & \text{if}\ a\in \cG_p,\\
       0, & \text{otherwise,}\\
\end{cases}
\end{equation}
where $\ord(\chi)$ denotes the order of $\chi$ in the group of characters $\cX_p$.

Separating the contribution of the principal character, corresponding to $t =1$ in~\eqref{eq:primroot}, we arrive at 
\begin{equation}
\label{eq: Asymp 1}
 \sum_{p \in \mathcal{P}} \, \sum_{\substack{-y \le a \le y\\a \in \cG_p}} 1 =  \#([-y,y]\cap \mathbb{Z})\cdot \sum_{p \in \cP} \frac{\varphi(p-1)}{p-1} + O(E+y \log y)
\end{equation}
with 
$$
E=  \sum_{p \in \mathcal{P}}
\sum_{\substack{t \mid p-1\\t > 1}} \frac{|\mu(t)|}{ \varphi(t)} 
 \sum_{\substack{\chi \in \cX_p^* \\ \ord(\chi) = t}}   \left|\sum_{-y\le a \le y} \chi(a)\right|,
$$
where the $O(y \log y)$ term comes from  the contribution
$$
\sum_{p \le y} \(2y/p + O(1)\) = 2y  \sum_{p \le y} 1/p + O(1) \ll y \log y
$$
of the principal characters modulo $p$ on values $a \equiv 0 \pmod p$.

First, we note that 
$$
 \sum_{p \in \cP} \frac{\varphi(p-1)}{p-1}    =    \sum_{p \le x} \frac{\varphi(p-1)}{p} 
 +  O(x^{1/2}).
$$
Since by~\cite[Lemma~1]{Steph} we have 
\begin{equation}\label{eq:phi}
 \sum_{p \le x} \frac{\varphi(p-1)}{p} = A \pi(x) +  O_D\left(\frac{\pi(x)}{(\log x )^{3D}}\right), 
\end{equation}
with $A$ given by~\eqref{eq:  Art Const}, we can rewrite~\eqref{eq: Asymp 1} as
\begin{equation}
\label{eq: Asymp 2}
 \sum_{p \in \cP} \, \sum_{\substack{-y \le a \le y\\a \in \cG_p}} 1 =   2A y\pi(x) +  O_D\left(\frac{y\pi(x)}{(\log x )^{3D}} +E\right).
\end{equation}
Thus it remains to estimate $E$. 

Using~\eqref{eq: small Spy}, we bound 
\begin{equation}\label{eq:Ebound}
E\ll_D   \frac{y}{\(\log x\)^{3D+2}}  \sum_{p \in \cP}\sum_{\substack{t \mid p-1\\t > 1}} \frac{\left| \mu(t) \right|}{ \varphi(t)}
 \sum_{\substack{\chi \in \cX_p^* \\ \ord(\chi) = t}}   1.
\end{equation}
Since the group $\cX_p$ is cyclic of order $p-1$, for each divisor $t \mid p-1$ 
there are $\varphi(t)$ characters $\chi \in \cX_p^*$ with $\ord(\chi) = t$.
Therefore we have
$$
\sum_{\substack{t \mid p-1\\t > 1}} \frac{\left| \mu(t) \right|}{ \varphi(t)}  \sum_{\substack{\chi \in \cX_p^* \\ \ord(\chi) = t}}   1
= \sum_{\substack{t \mid p-1\\t > 1}}|\mu(t)|=2^{\omega(p-1)}-1. 
$$
Recalling~\eqref{eq:Ebound}, we see that 
$$
E \ll_D   \frac{y}{\(\log x\)^{3D+1}} \sum_{p\le  x}2^{\omega(p-1)}\ll    \frac{yx}{\(\log x\)^{3D+2}}\ll \frac{y\pi(x)}{(\log x)^{3D}} 
$$
by the Titchmarsh divisor estimate 
\begin{equation}\label{eq:titchmarsh}
\sum_{p\le  x}2^{\omega(p-1)}\ll x
\end{equation}
(see~\cite{Titchmarsh}) and the prime number theorem. This together with~\eqref{eq: Good Primes} and~\eqref{eq: Asymp 2} concludes the proof. 
\end{proof}

\subsection{Proof of Proposition~\ref{prop:L2}}
Let $D\ge 10$ be fixed. Expanding out the square and applying Proposition~\ref{prop:L1}, we have
\begin{align*}
\sum_{-y\le  a\le  y}&\left|N_a(x)-A\pi(x)\right|^2\\
&  =\sum_{-y\le  a\le  y}N_a(x)^2-2A\pi(x)\sum_{-y\le  a\le  y}N_a(x)+A^2\pi(x)^2\sum_{-y\le  a\le  y}1\\
&  =\sum_{-y\le  a\le  y}N_a(x)^2-2A^2y\pi(x)^2+O_D\left(\frac{y\pi(x)^2}{(\log x)^{3D}}\right). 
\end{align*}
Let 
\begin{align*}
\cQ = \{(p_1,p_2)\in [1,x]^2\colon~ (\log x)^{3D+2}\le p_1,p_2&\le  x,~p_1\neq p_2,\\
&  p_1,p_2,p_1p_2 \not \in \mathcal{E}\},
\end{align*}
where $\mathcal{E}$ is the set of natural numbers $q\le  x^2$ for which
$$
\max_{\chi\in \mathcal{X}_q^{*}}\left|\sum_{1\le  a\le  y}\chi(a)\right|>\frac{y}{(\log x)^{3D+2}}.    
$$
We apply Theorem~\ref{thm:short char sum} with 
$\lambda$ as in~\eqref{eq:Cdef}. Note that this is an admissible choice, since we have $y>\exp(20\lambda \log(\lambda(\log \log(x^2)))(\log \log(x^2)))$. Now we conclude that
$$
 \#\mathcal{E}\ll x^{2\cdot 0.49}= x^{0.98}.    
$$
Hence we have
\begin{align*}
\#\mathcal{Q}=\#\{(p_1,p_2)\in [1,x]^2\colon~ (\log x)^{3D+2}\le p_1,p_2\le  x,~p_1&\neq p_2\}\\
&+O(x^{1.98}).    
\end{align*}

Again denoting by $\mathcal{G}_p$ the set of primitive roots modulo $p$, we have
$$
\sum_{-y\le  a\le  y}N_a(x)^2=\sum_{-y\le  a\le  y}\sum_{\substack{p_1,p_2\le  x\\ a\in \mathcal{G}_{p_1}\\ a\in \mathcal{G}_{p_2}}}1=\sum_{\substack{p_1,p_2\le  x\\ (p_1,p_2)\in \mathcal{Q}}}\sum_{\substack{-y\le  a\le  y\\ a\in \mathcal{G}_{p_1}\\ a\in \mathcal{G}_{p_2}}}1+O(yx^{1.98}).
$$ 
Using~\eqref{eq:primroot}, the main term in the above equation becomes
\begin{align*}
S = \sum_{\substack{p_1,p_2\le  x\\ (p_1,p_2)\in \mathcal{Q}}}\frac{\varphi(p_1-1)\varphi(p_2-1)}{(p_1-1)(p_2-1)}& \sum_{\substack{t_1\mid p_1-1\\t_2\mid p_2-1}} \frac{\mu(t_1)\mu(t_2)}{\varphi(t_1)\varphi(t_2)}\\
& \quad \sum_{\substack{\chi_1\in \mathcal{X}_{p_1}\\\ord(\chi_1)=t_1}} 
 \sum_{\substack{\chi_2\in \mathcal{X}_{p_2}\\\ord(\chi_2)=t_2}} \sum_{-y\le  a\le  y}\chi_1\chi_2(a), 
\end{align*}
where $\chi_1\chi_2(a) = \chi_1(a)\chi_2(a)$ is a now a character modulo $p_1p_2$ (since $p_1 \ne p_2$).
Let us split this as 
$$
S=\sum_{\substack{p_1,p_2\le  x\\ (p_1,p_2)\in \mathcal{Q}}}\frac{\varphi(p_1-1)\varphi(p_2-1)}{(p_1-1)(p_2-1)}\sum_{\substack{-y\le  a\le  y\\\gcd(a,p_1p_2)=1}}1+S_1+S_2+S_3,
$$
where $S_1$ corresponds to those terms with $t_1=1, t_2>1$, $S_2$ corresponds to those terms with $t_1>1, t_2=1$, and $S_3$ corresponds to those terms with $t_1,t_2>1$.  By completing the sum over $(p_1,p_2)\in \mathcal{Q}$ to all pairs $(p_1,p_2)\in [1,x]^2$ and applying~\eqref{eq:phi}, we see that
\begin{equation}\begin{split}\label{eq:Sbound}
S&=2y\left(\sum_{p\le  x}\frac{\varphi(p-1)}{p-1}\left(1-\frac{1}{p}\right)\right)^2\\
& \qquad \qquad \qquad \qquad +S_1+S_2+S_3+O_D\left(\frac{yx^2}{(\log x)^{3D+2}}\right)\\
&=2A^2y\pi(x)^2+S_1+S_2+S_3+O_D\left(\frac{y\pi(x)^2}{(\log x)^{3D}}\right). 
\end{split}
\end{equation}
The sums $S_1$ and $S_2$ are bounded symmetrically; let us bound $S_1$. Note that if $\chi_1$ is a non-principal character $\pmod{p_1}$ and $\chi_0$ is the principal character $\pmod{p_2}$, by the assumption $(p_1,p_2)\in \mathcal{Q}$ (which implies $p_1\not \in \mathcal{E}$ and $p_2\geq (\log x)^{3D+2}$), we have 
$$
\left|\sum_{-y\le  a\le  y}\chi_1\chi_0(a)\right|=\left|\sum_{-y\le  a\le  y}\chi_1(a)\right|+O\left(\frac{y}{p_2}\right)\ll_D \frac{y}{(\log x)^{3D+2}}.    
$$
Hence, we have
\begin{align*}
S_1&\ll_D \frac{y}{(\log x)^{3D+2}}\sum_{p_1,p_2\le  x}\sum_{t_1\mid p_1-1}\frac{|\mu(t_1)|}{\varphi(t_1)}\sum_{\substack{\chi_1\in \mathcal{X}_{p_1}\\\ord(\chi_1)=t_1}}1\\
&\ll  \frac{y\pi(x)}{(\log x)^{3D+2}}\sum_{p_1\le  x}2^{\omega(p_1-1)}\\
&\ll  \frac{y\pi(x)^2}{(\log x)^{3D}}   
\end{align*}
by the  Titchmarsh divisor bound~\eqref{eq:titchmarsh}. 

We are left with bounding $S_3$. Since in this sum $\chi_1\chi_2$ is a primitive character modulo $p_1p_2\le  x^2$ with $p_1p_2\not \in \mathcal{E}$, we have
\begin{align*}
 S_3&\ll_D \frac{y}{(\log x)^{3D+2}}\sum_{p_1,p_2\le  x}\sum_{\substack{t_1\mid p_1-1\\t_2\mid p_2-1}}\frac{|\mu(t_1)||\mu(t_2)|}{\varphi(t_1)\varphi(t_2)}\sum_{\substack{\chi_1\in \mathcal{X}_{p_1}\\\ord(\chi_1)=t_1\\\chi_2\in \mathcal{X}_{p_2}\\\ord(\chi_2)=t_2}}1\\
 &\ll  \frac{y}{(\log x)^{3D+2}}\left(\sum_{p\le  x}2^{\omega(p-1)}\right)^2\\
 &\ll \frac{y\pi(x)^2}{(\log x)^{3D}}
\end{align*}
by the Titchmarsh divisor bound~\eqref{eq:titchmarsh}. Now the claim follows by collecting the bounds for $S_1,S_2,S_3$ and recalling~\eqref{eq:Sbound}. 

This concludes  the proof of  Proposition~\ref{prop:L2}, which as we have mentioned,  is enough to conclude the proof of Theorem~\ref{thm:Artin Aver}.

\end{document}